\newtheorem{lemma}{Lemma}
\newtheorem{theorem}{Theorem}
\newcommand{%
	
	\import{./figures/}{.pdf_tex}
}[1]{%
	
	\import{./figures/}{#1.pdf_tex}
}
\title{Incidence Bounds on Edge Partitions of $K_n$}
\author{Andean E. Medjedovic}
\date{}
\begin{document}

\maketitle

\begin{abstract}
	We solve a problem conjectured by Cheriyan, giving sharp bounds for incidence
	of certain edge partitions of the connected graph on $n$-vertices.
	We briefly discuss the history of the problem and
	relation to node connectivity
	of strongly regular graphs.
	We show that the bound cannot be made sharper.

\end{abstract}
\section{Introduction}
Several characterizations of edge-connectivity are known within the literature. See, for instance,  \cite{Nash_williams_1960},  \cite{Kir_ly_2006}, or  \cite{Thomassen_2015}. In recent years there has been a push to understand an analogous notion of connectivity for nodes  \cite{Kir_ly_2008}. One earlier result
within this area is found in \cite{cheriyan18}.

\begin{theorem}\label{thm2}
Let $G$ be a $2k$-regular $2k$-node connected graph such that for every set of nodes $S$ with
$1 \leq |S|  \leq \frac{|V(G)|}{2}$ we have $|N_G(S)| > \min \{ k^2 - 1, (k - 1)(|S| + 1) \}$. Then every Eulerian
orientation of $G$ is strongly $k$-node connected.
\end{theorem}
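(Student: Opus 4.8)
The plan is to argue by contradiction, converting the failure of strong $k$-node connectivity into a rigid cut structure and then playing the Eulerian degree balance against the neighborhood hypothesis. Suppose some Eulerian orientation $D$ of $G$ is not strongly $k$-node connected. Then there is a set $T \subseteq V(G)$ with $|T| \leq k-1$ whose deletion leaves $D - T$ not strongly connected, so $V(G) \setminus T$ splits into nonempty parts $A \sqcup B$ with no arc directed from $A$ to $B$ in $D$; equivalently, every edge of $G$ between $A$ and $B$ is oriented from $B$ to $A$. Relabeling if necessary I may assume $|A| \leq |B|$, so that $1 \leq |A| \leq |V(G)|/2$ and the neighborhood hypothesis applies with $S = A$. (Here $2k$-node connectivity is what guarantees $|V(G)| \geq |T| + 2$, so both parts are genuinely nonempty.)

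The heart of the argument is an Eulerian balance computation. Because $D$ orients a $2k$-regular graph with in-degree equal to out-degree at every vertex, each vertex has in-degree and out-degree exactly $k$; summing over $A$, the number of arcs leaving $A$ equals the number entering $A$. Since no arc leaves $A$ for $B$, every out-arc of $A$ lands in $T$, whereas $A$ receives all $e(A,B)$ arcs from $B$ together with whatever it gets from $T$ (writing $e(X,Y)$ for the number of $G$-edges between $X$ and $Y$). The balance therefore forces $e(A,B)$ to be at most the number of arcs from $A$ into $T$, which is bounded both by $e(A,T) \leq |A|\,|T|$ and by the total in-capacity $k|T|$ of $T$; hence $e(A,B) \leq |T|\cdot\min\{|A|, k\}$. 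On the other side, $N_G(A) \subseteq B \cup T$ and each neighbor of $A$ lying in $B$ is the endpoint of a distinct $A$–$B$ edge, so $|N_G(A)| \leq e(A,B) + |T|$.

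Combining the two estimates yields $|N_G(A)| \leq |T|\bigl(\min\{|A|,k\}+1\bigr) \leq (k-1)\bigl(\min\{|A|,k\}+1\bigr)$, and the contradiction then comes from matching regimes: if $|A| \geq k$ the right side equals $(k-1)(k+1) = k^2-1$, while if $|A| \leq k-1$ it equals $(k-1)(|A|+1)$, so in either case $|N_G(A)| \leq \min\{k^2-1,\,(k-1)(|A|+1)\}$, contradicting the strict hypothesis. I expect the main obstacle to be precisely this case analysis on $|A|$ — indeed it explains the otherwise mysterious $\min$ in the statement, since the Eulerian bound on $e(A,B)$ saturates through the $A$–$T$ edge count for small $A$ but through the in-degree capacity of $T$ for large $A$, and each regime must be paired with the correct branch of the minimum. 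The remaining steps — the translation of non-strong-connectivity into the $(A,B,T)$ partition and the elementary counting $|N_G(A) \cap B| \leq e(A,B)$ — should be routine.
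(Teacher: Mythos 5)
The paper does not actually prove this statement: Theorem \ref{thm2} is imported verbatim from \cite{cheriyan18} and used as a black box, so there is no internal proof to compare yours against. On its own merits your argument is correct and is the natural one: the failure of strong $k$-node connectivity yields $T$ with $|T|\le k-1$ and a partition $A\sqcup B$ of $V(G)\setminus T$ with all cut arcs oriented one way, the Eulerian balance at $A$ (equal numbers of arcs entering and leaving $A$) forces $e(A,B)\le |T|\min\{|A|,k\}$ via either the edge count $e(A,T)\le |A||T|$ or the degree capacity $k|T|$ of $T$, and then $|N_G(A)|\le e(A,B)+|T|\le (k-1)(\min\{|A|,k\}+1)$ lands exactly on the correct branch of $\min\{k^2-1,(k-1)(|A|+1)\}$ in each regime, contradicting the hypothesis; this also explains the shape of the threshold, as you observe. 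One small imprecision: after you relabel to arrange $|A|\le |B|$, the cut arcs may all point from $A$ to $B$ rather than from $B$ to $A$, so your sentence asserting that every $A$--$B$ edge is oriented into $A$ is only guaranteed in one of the two labelings. This is harmless --- the balance identity is symmetric, and in the other case you bound $e(A,B)$ by the arcs from $T$ into $A$, using the out-capacity $k|T|$ of $T$ instead of its in-capacity --- but the final write-up should either state both cases or phrase the bound as ``$e(A,B)$ is at most the number of arcs between $A$ and $T$ in the appropriate direction.''
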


Here $|N_G(S)|$ is, of course, the number of nodes in $G$ adjacent to $S$.
This theorem is then used to prove the following within the same paper.

\begin{theorem}
	Every Eulerian orientation of the hypercube of degree $2k$ is strongly $k$-node connected.
\end{theorem}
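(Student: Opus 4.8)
The plan is to obtain this statement as a direct corollary of Theorem~\ref{thm2}, applied to the $2k$-dimensional hypercube $G = Q_{2k}$. Two of the three hypotheses are immediate: $Q_{2k}$ is $2k$-regular by construction, and it is classical that the $d$-dimensional hypercube is $d$-node connected (its connectivity equals its minimum degree), so $Q_{2k}$ is $2k$-node connected. Moreover $2k$ is even and $Q_{2k}$ is connected, so Eulerian orientations exist and the conclusion is not vacuous. Everything therefore reduces to verifying the neighbourhood-expansion hypothesis
\[
|N_{Q_{2k}}(S)| > \min\{\,k^2 - 1,\ (k-1)(|S|+1)\,\}\qquad\text{for all }S\text{ with }1 \le |S| \le 2^{2k-1}.
\]
Since the set of vertices outside $S$ having a neighbour in $S$ is contained in $N_{Q_{2k}}(S)$, it suffices to bound this outer vertex boundary from below.

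To control that boundary uniformly over all $S$ I would invoke Harper's vertex-isoperimetric theorem: among all vertex sets of a fixed size $m$, the outer vertex boundary in $\{0,1\}^{2k}$ is minimized by an initial segment of the simplicial order, i.e.\ by a Hamming ball together with a partial shell. It then suffices to verify the displayed inequality for these explicit extremal sets, and the two terms of the $\min$ correspond to two ranges of $m$. For the small range $1 \le m \le k$ the extremal segment is the origin together with $m-1$ weight-one vertices; a direct count gives outer boundary $2k + 2k(m-1) - \tfrac32(m-1) - \tfrac12(m-1)^2$, and subtracting $(k-1)(m+1)$ reduces the claim to $t\cdot\tfrac{2k-1-t}{2} + 2 > 0$ for $t = m-1 \in [0,k-1]$, which is clear. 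For the large range $k \le m \le 2^{2k-1}$ the extremal set contains a full Hamming ball of radius at least $1$, whose shell has size at least $\binom{2k}{2} = 2k^2 - k \ge k^2 > k^2-1$; every complete ball of radius $r$ with $1 \le r \le k-1$ has boundary $\binom{2k}{r+1} \ge \binom{2k}{2}$, so the constant threshold $k^2-1$ is cleared with room to spare.

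The main obstacle is precisely this large range, because the vertex-isoperimetric profile of the cube is \emph{not} monotone in $m$ (a complete Hamming ball can have strictly smaller boundary than sets slightly below it in size), so one cannot simply evaluate at the transition point $m=k$. The careful step is to show that the \emph{intermediate} initial segments — a ball of radius $r$ plus $s$ shell vertices, with $m$ strictly between two consecutive ball sizes — never let the boundary dip below $k^2$. I would write the boundary of such a segment as $\big(\binom{2k}{r+1} - s\big)$ plus the number of radius-$(r{+}2)$ shell vertices covered by the $s$ chosen vertices, and bound the covered-shell term below. The point is that the two endpoint values $\binom{2k}{r+1}$ and $\binom{2k}{r+2}$ both exceed $k^2$ by at least $k^2 - k$, a margin large enough to absorb any transient decrease as $s$ grows. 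Assembling the two ranges verifies the hypothesis of Theorem~\ref{thm2} for $G = Q_{2k}$, which yields the result.
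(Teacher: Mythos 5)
You should first note that this paper does not actually prove the hypercube theorem: it is imported verbatim from \cite{cheriyan18}, where (as the surrounding text says) it is deduced from Theorem \ref{thm2}. So there is no in-paper argument to match, but your top-level strategy --- apply Theorem \ref{thm2} to $G=Q_{2k}$ and verify the neighbourhood-expansion hypothesis for all $S$ with $1\le |S|\le 2^{2k-1}$ --- is exactly the derivation the paper attributes to that reference. Your reduction to the outer vertex boundary, the appeal to Harper's theorem, and the small-range computation are all correct: for the initial segment consisting of the origin plus $t=m-1$ weight-one vertices the boundary is indeed $(2k-t)+\bigl(\binom{2k}{2}-\binom{2k-t}{2}\bigr)=2k+2kt-\tfrac{3t}{2}-\tfrac{t^2}{2}$, and subtracting $(k-1)(t+2)$ leaves $t\cdot\tfrac{2k-1-t}{2}+2>0$ on $0\le t\le k-1$, which settles the range where $(k-1)(|S|+1)$ is the smaller term of the $\min$.

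The genuine gap is in the large range, precisely where you flag "the careful step." Your justification that the endpoints $\binom{2k}{r+1}$ and $\binom{2k}{r+2}$ exceed $k^2$ by a "margin large enough to absorb any transient decrease" is not an argument: the visible part of the boundary, $\binom{2k}{r+1}-s$, tends to $0$ as $s$ grows, so everything hinges on the lower bound for the upper shadow that you mention but never supply, and no endpoint margin can substitute for it. The fix is standard but must be stated: by the local LYM inequality (normalized matching property of the Boolean lattice), $s$ vertices at level $r+1$ have upper shadow of size at least $s\,\binom{2k}{r+2}/\binom{2k}{r+1}$, which is at least $s$ whenever $r+2\le k$; hence for $1\le r\le k-2$ the boundary of a ball-plus-partial-shell segment is at least $\binom{2k}{r+1}\ge\binom{2k}{2}=2k^2-k\ge k^2$. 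The remaining top level $r=k-1$ is where the ratio drops below $1$, and there you must invoke the cardinality cap $|S|\le 2^{2k-1}$, which forces $s\le\tfrac12\binom{2k}{k}$ and gives boundary at least $\binom{2k}{k}-s/(k+1)\ge\bigl(1-\tfrac{1}{2(k+1)}\bigr)\binom{2k}{k}$, comfortably above $k^2-1$ for $k\ge 3$ (check $k=2$ by hand, and note $k=1$ is vacuous since the hypothesis reads $|N(S)|>0$). With that lemma inserted the proof closes; without it the large-range case is asserted rather than proved.
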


In an effort to extend this to a larger family of strongly regular graphs the
Johnson $J(n,2)$ (the line graph of $K_n$) was considered. It was remarked that this problem is equivalent to proving the main theorem within this work. Since then, the analogue for theorem \ref{thm2} on $J(n,2)$ was proven in \cite{Hoersch2019} but the incidence bound remained an open question.

In this paper we provide an alternate proof to the one found in \cite{Hoersch2019} of the parallel result for the line graph of the connected graph $K_n$.
In doing so we prove certain incidence bounds for edge partitions of $K_n$.

The author would like to thank Joseph Cheriyan for his comments and encouragements.
Without him this paper would not have been possible.

\section{The main result}
Let $K_n$ be the connected graph on $n$ vertices. Partition the edges of $K_n$ into $3$ sets $S$,$T$, and $Z$ with $|Z| = n -3$.
We prove that the incidence of $S$ and $T$ is at least the minimum of the incidence of either $Z$ and $T$ or $Z$ and $S$.

Label the vertices $v_1, \ldots, v_n$ and let the degree in $S,T,Z$ of $v_i$ be $s_i,t_i,z_i$, respectfully. We prove

\begin{theorem}{\label{t1}}
	\[
	\sum_{i=1}^n s_it_i \ge \min \{\sum_{i=1}^n z_i t_i , \sum_{i=1}^n z_i s_i\}\\
	.\]
\end{theorem}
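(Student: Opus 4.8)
The plan is to reduce the statement to a single cleaner inequality and then run an extremal argument in which the constraint $|Z| = n-3$ does the real work. Since the claim is symmetric under interchanging $S$ and $T$, I would assume without loss of generality that $\sum_i z_i s_i \le \sum_i z_i t_i$, so the minimum on the right is $\sum_i z_i s_i$; writing $A = \sum_i s_i t_i$, $B = \sum_i z_i s_i$, $C = \sum_i z_i t_i$, the goal becomes $A \ge B$, i.e. $2A + |B-C| \ge B + C$. Using $s_i + t_i + z_i = n-1$ at every vertex, set $w_i = s_i + t_i = n-1-z_i$ and $\delta_i = s_i - t_i$. Since $s_i t_i = (w_i^2 - \delta_i^2)/4$, a short computation turns the target into
\[
\sum_i w_i(w_i - 2z_i) \;-\; \sum_i \delta_i^2 \;+\; 2\Bigl|\sum_i z_i\delta_i\Bigr| \;\ge\; 0,
\]
where the first sum depends only on the $Z$-graph and $|\delta_i| \le w_i$.

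The guiding observation is that $2A = \tfrac12\sum_i(w_i^2 - \delta_i^2)$ is concave in the polarization vector $\delta = (\delta_i)$, so on each of the two half-spaces $\sum_i z_i\delta_i \ge 0$ and $\sum_i z_i\delta_i \le 0$ the left-hand side is concave and hence minimized at an extreme point, at which every vertex is \emph{pure}, i.e. $\delta_i = \pm w_i$ (equivalently $s_i = 0$ or $t_i = 0$), with at most one vertex left mixed by the hyperplane $B = C$. I would therefore try to show that a hypothetical counterexample can be pushed to such a pure configuration: among all edge $2$-colorings of $K_n \setminus Z$ that violate the inequality, pick one maximizing the polarization $\sum_i \delta_i^2$, and analyze recoloring a single edge $\{i,j\}$ of $S \cup T$. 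Flipping it from $T$ to $S$ sends $\delta_i,\delta_j \mapsto \delta_i+2,\delta_j+2$, changes $A$ by $-(\delta_i+\delta_j)-2$, and changes $B - C$ by $2(z_i+z_j)$; tracking these, I would argue that short of purity there is always a polarizing recolor preserving the violation, contradicting maximality.

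Once the extremal counterexample is pure, set $V_S = \{i : t_i = 0\}$ and $V_T = \{i : s_i = 0\}$. Any edge of $K_n$ between $V_S$ and $V_T$ cannot lie in $S$ (it would raise some $s_i$ with $i \in V_T$) nor in $T$, so it lies in $Z$; hence $|V_S|\,|V_T| \le |Z| = n-3$. But all vertices are pure with $w_i \ge n-1-(n-3) = 2 > 0$, so $V_S$ and $V_T$ partition the vertex set, $|V_S| + |V_T| = n$, and if both are nonempty then $|V_S|\,|V_T| \ge n-1$. This contradicts $|V_S|\,|V_T| \le n-3$, forcing $S = \emptyset$ or $T = \emptyset$, whence $\min\{B,C\} = 0 \le A$ and there is no violation. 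Deleting the single mixed vertex in the boundary case leaves $|V_S| + |V_T| = n-1$, giving $|V_S|\,|V_T| \ge n-2 > n-3$, so that case is handled identically. This step is exactly where $|Z| = n-3$ is used, and since the count degenerates when $|Z|$ reaches $n-2$ it is also what should drive the sharpness claim.

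The main obstacle is that the concavity heuristic is, on its own, false: over all degree sequences (equivalently, over the continuous box $|\delta_i| \le w_i$) the displayed inequality genuinely fails at pure points where both $V_S$ and $V_T$ carry positive $z$-weight. The bound survives only because such pure vectors are not realizable as edge-partitions once $|Z| = n-3$, so realizability cannot be bypassed. The real work is therefore the discrete purification step — proving that a polarizing recoloring preserving the strict violation always exists short of purity. The delicate points there are the non-smoothness of $|B - C|$ (one must treat $B \ge C$ and $B \le C$ separately and control a recolor that crosses $B = C$) and the fact that $K_n \setminus Z$ is connected with minimum degree at least $2$ (as only $n-3 < n-1$ edges are deleted from the $(n-1)$-edge-connected $K_n$), which is what obstructs naive full purification and must be converted into the quantitative contradiction above.
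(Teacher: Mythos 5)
Your reduction to $\sum_i w_i(w_i-2z_i)-\sum_i\delta_i^2+2\bigl|\sum_i z_i\delta_i\bigr|\ge 0$ is correct, and the endgame is genuinely attractive: in a pure configuration every edge of $K_n$ between $V_S$ and $V_T$ must lie in $Z$, so $|V_S|\,|V_T|\le n-3$ while $|V_S|+|V_T|=n$ forces one side to be empty; this is exactly where $|Z|=n-3$ versus $n-2$ enters, consistent with the paper's sharpness example. But there is a genuine gap precisely where you flag it: the purification lemma --- that every non-pure violating coloring admits a single-edge recoloring which increases $\sum_i\delta_i^2$ while preserving $A<\min\{B,C\}$ --- is only asserted (``I would argue that\dots'') and never proved, and it is not a routine verification. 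Your own move calculus exposes the competing effects: flipping $\{i,j\}$ from $T$ to $S$ is polarizing precisely when $\delta_i+\delta_j+2>0$, in which case $A$ drops by $\delta_i+\delta_j+2$ but $C$ drops by $z_i+z_j$, so the margin $C-A$ changes by $(\delta_i+\delta_j+2)-(z_i+z_j)$, which can be negative; there is no a priori reason that some flip keeps both violations alive. The concavity argument that would guarantee termination at a pure point applies to the box $|\delta_i|\le w_i$, whose extreme points are not the extreme points of the discrete set of realizable colorings, and since you correctly observe that the continuous relaxation is actually false at unrealizable pure points, concavity cannot be the engine: the unproven discrete lemma carries the entire proof.

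For comparison, the paper does not purify at all. It assumes both inequalities fail, sums them to get $\tfrac12\sum_i z_i^2+\sum_i s_it_i<(n-1)(n-3)$, and contradicts this by direct lower bounds organized around $p=|\{i:s_i=0\}|$ and $q=|\{i:t_i=0\}|$ (your $|V_T|$ and $|V_S|$), using $p+q\le 2\sqrt{n-3}$, a pigeonhole count of common $T$-neighbours of two vertices of $P$, and finite checks for small $(p,q,n)$. If you can actually establish the recoloring lemma, your route would be cleaner and more conceptual than the paper's case analysis; as written, however, the proposal is an unproven reduction rather than a proof.
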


We begin with an elementary observation, since the degree of each node is $n-1$,it follows that $s_i+t_i+z_i = n-1$.
Suppose $s_1, \ldots, s_p$ are the nodes with degree in $S$ equal to $0$, let $t_{p+1},
,\ldots, t_{p+q}$ be the nodes with degree in $T$ being $0$. We let $P = \{v_1, \ldots, v_p\}$,
$Q = \{v_{p+1}, \ldots v_{p+q}\}$, $R = V\setminus(P\cup Q)$.

Assume for contradiction that $\sum_{i=1}^n s_i t_i < \sum_{i=1}^n z_i t_i$ and $\sum_{i=1}^n s_i t_i < \sum_{i=1}^n z_i s_i$. We first prove a few quick lemmas:

\begin{lemma}{\label{l1}}
	We have:
	$$2(n-1)(n-3) -\sum_{i=1}^n z_i^2 = (n-1)\sum_{i=1}^n z_i -\sum_{i=1}^n z_i^2 > \sum_{i=1}^n (n-1)t_i  - \sum_{i=1}^n t_i^2$$ \label{eq1}

	$$2(n-1)(n-3) -\sum_{i=1}^n z_i^2 = (n-1)\sum_{i=1}^n z_i -\sum_{i=1}^n z_i^2 > \sum_{i=1}^n (n-1)s_i  - \sum_{i=1}^n s_i^2$$ \label{eq2}
	And:
	\begin{equation}
		\frac{\sum_{i=1}^n z_i^2}{2} + \sum_{i=1}^n s_i t_i < (n-1)(n-3)
	\end{equation}\\
\end{lemma}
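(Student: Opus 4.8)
The plan is to reduce all three assertions of Lemma \ref{l1} to the two standing contradiction hypotheses together with a single handshake count, so that the lemma is pure bookkeeping rather than a genuine inequality to be estimated. First I would record the two elementary facts that drive everything. Since $|Z| = n-3$, summing degrees over all vertices gives $\sum_{i=1}^n z_i = 2|Z| = 2(n-3)$, whence $(n-1)\sum_{i=1}^n z_i = 2(n-1)(n-3)$. This is exactly the claimed equality $2(n-1)(n-3) - \sum z_i^2 = (n-1)\sum z_i - \sum z_i^2$, disposing of the equalities in both displayed lines at once.

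For the key algebraic step I would use $s_i + t_i + z_i = n-1$ to rewrite each quadratic term $(n-1)x_i - x_i^2 = x_i\big((n-1) - x_i\big)$ as a product with the sum of the other two degrees: $(n-1)z_i - z_i^2 = z_i(s_i + t_i)$, $(n-1)t_i - t_i^2 = t_i(s_i + z_i)$, and $(n-1)s_i - s_i^2 = s_i(t_i + z_i)$. Summing over $i$ turns the relevant sides into the cross-term sums $\sum z_i s_i + \sum z_i t_i$, $\sum s_i t_i + \sum z_i t_i$, and $\sum s_i t_i + \sum s_i z_i$ respectively. The first displayed inequality then collapses, after cancelling the common term $\sum z_i t_i$, to $\sum z_i s_i > \sum s_i t_i$, and the second, after cancelling $\sum z_i s_i$, to $\sum z_i t_i > \sum s_i t_i$. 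These are precisely the two contradiction hypotheses, so both inequalities hold under the assumption being refuted.

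Finally, for the last displayed inequality I would clear the denominator: it is equivalent to $\sum z_i^2 + 2\sum s_i t_i < 2(n-1)(n-3) = (n-1)\sum z_i$, i.e. to $2\sum s_i t_i < (n-1)\sum z_i - \sum z_i^2 = \sum z_i s_i + \sum z_i t_i$, using the product identity from the previous step. Adding the two hypotheses $\sum s_i t_i < \sum z_i s_i$ and $\sum s_i t_i < \sum z_i t_i$ yields exactly this. There is no real obstacle here beyond keeping the cross-terms straight; the only thing to get right is the substitution $n-1-x_i$ equals the sum of the other two degrees. If anything, the subtle point is simply recognizing that the three assertions of the lemma are restatements of the standing hypotheses and the handshake count, rather than independent claims requiring a convexity or Cauchy--Schwarz estimate.
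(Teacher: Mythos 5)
Your proposal is correct and is essentially the paper's own argument: the paper likewise derives the two strict inequalities from the standing hypotheses via the substitution $t_i = n-1-s_i-z_i$ (equivalently, your identity $(n-1)x_i - x_i^2 = x_i\cdot(\text{sum of the other two degrees})$), and obtains the final inequality by adding the two hypotheses and using $\sum_i z_i = 2(n-3)$. Your cross-term formulation is a slightly cleaner way of writing the same cancellation, and it also makes transparent the sign slip in the paper's penultimate display, where $-\sum_i z_i^2$ should read $+\sum_i z_i^2$.
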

\begin{proof}
From
	$$\sum_{i=1}^n s_it_i < \sum_{i=1}^n z_it_i$$
	Write $t_i = n-1-s_i-z_i$ to get

	$$(n-1)\sum_{i=1}^n z_i -\sum_{i=1}^n z_i^2 > \sum_{i=1}^n (n-1)s_i  - \sum_{i=1}^n s_i^2$$
	And by symmetry we have the other inequality. For the last inequality simply sum the $2$ incidence inequalities (note that $\sum z_i = 2(n-3)$):

	$$ 2 \sum_{i=1}^n s_i t_i < \sum_{i=1}^n z_i (t_i+s_i)$$
	$$ 2\sum_{i=1}^n s_i t_i -\sum_{i=1}^n z_i^2 < \sum_{i=1}^n z_i(n-1) = 2(n-1)(n-3)$$

	And divide through by $2$.
\end{proof}

\begin{lemma}{\label{p+q}}
	\[
	p+q \le 2\sqrt{n-3}
	.\]
\end{lemma}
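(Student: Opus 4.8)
The plan is to first extract the purely combinatorial content of the sets $P$ and $Q$, and then to lean on the two standing inequalities $\sum s_it_i<\sum z_it_i$ and $\sum s_it_i<\sum z_is_i$ to control the balance between $p$ and $q$.

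First I would record the structural facts. No vertex can lie in both $P$ and $Q$: if some $v$ had $s_v=t_v=0$ then $z_v=n-1$, forcing $|Z|\ge n-1>n-3$, a contradiction; hence $P\cap Q=\varnothing$. Next, every edge between $P$ and $Q$ must lie in $Z$, since such an edge cannot be in $S$ (its $P$-endpoint has $s_i=0$) nor in $T$ (its $Q$-endpoint has $t_i=0$). As $K_n$ is complete there are exactly $pq$ such edges, so $pq\le |Z|=n-3$. The same observation yields the pointwise bounds $z_i\ge q$ for every $i\in P$ and $z_i\ge p$ for every $i\in Q$, alongside $\sum_{i=1}^n z_i=2(n-3)$.

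The bound $pq\le n-3$ is not by itself enough: a ``star'' configuration with $q=1$ and $p$ large satisfies it while making $p+q$ as large as $n-2$. So the second half of the argument must invoke the hypotheses. I would argue by contradiction: suppose $p+q>2\sqrt{n-3}$. Since $pq\le n-3<\left(\tfrac{p+q}{2}\right)^2$, the two integers cannot be equal, so after possibly interchanging the roles of $S$ and $T$ (which swaps $P\leftrightarrow Q$ and the two standing inequalities) I may assume $p>q$; the configuration is forced to be imbalanced, and the goal becomes to show this imbalance contradicts $\sum s_it_i<\sum z_is_i$. Expanding that inequality over $P,Q,R$, and using $s_i=0$ on $P$ and $t_i=0$ on $Q$, it reads $\sum_{i\in R}s_it_i<\sum_{i\in Q}z_is_i+\sum_{i\in R}z_is_i$. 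On the left, every vertex of $R$ has $s_i,t_i\ge 1$, so $s_it_i\ge s_i+t_i-1=n-2-z_i$, and the large set $R$ (of size $n-p-q$) forces the left side to be big; on the right, the $Q$-terms are constrained by $z_i\ge p$, which makes $s_i=n-1-z_i$ small there, while the budget $\sum z_i=2(n-3)$ caps how much total $Z$-degree $Q\cup R$ can absorb. Comparing these estimates, together with $pq\le n-3$, should yield $\sum s_it_i\ge\sum z_is_i$, the desired contradiction; Lemma \ref{l1} (third inequality) is available as an additional cap on $\sum z_i^2$ should the $Z$-degree accounting require it.

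The main obstacle is exactly this final comparison in the borderline regime where $p+q$ only slightly exceeds $2\sqrt{n-3}$ and both sides have the same order of magnitude: the crude estimate $s_it_i\ge s_i+t_i-1$ and any crude bound on $\sum_{i\in R}z_is_i$ discard too much, so the $R$-contributions must be accounted for sharply rather than at their extreme values. I expect the clean route is an extremal/convexity analysis pinning the worst case at $p=q=\sqrt{n-3}$ with all of $Z$ carried by the complete bipartite graph between $P$ and $Q$ — precisely the configuration making $pq=n-3$ and $p+q=2\sqrt{n-3}$ hold simultaneously — since matching that boundary is what fixes the constant $2\sqrt{n-3}$.
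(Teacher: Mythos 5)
Your proposal does not reach the stated bound; it stops at a weaker inequality and then only sketches a plan for the rest. What you actually prove is the ``crossed'' pointwise bound ($z_i\ge q$ on $P$, $z_i\ge p$ on $Q$, via the observation that every $P$--$Q$ edge lies in $Z$), which sums to $2pq\le\sum z_i=2(n-3)$, i.e.\ $pq\le n-3$. As you yourself note with the star configuration, $pq\le n-3$ does not imply $p+q\le 2\sqrt{n-3}$. Everything after that point is conditional language (``should yield,'' ``I expect the clean route is\dots''), and you explicitly flag the borderline regime $p+q\approx 2\sqrt{n-3}$ as an unresolved obstacle where your estimates $s_it_i\ge n-2-z_i$ and the $Z$-degree budget are too crude. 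That unresolved comparison is precisely the content of the lemma, so the proof is incomplete.

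For comparison, the paper takes a different and much shorter route: it asserts the \emph{uncrossed} pointwise bounds $z_i\ge p$ for $v_i\in P$ and $z_i\ge q$ for $v_i\in Q$ (derived from the claims $t_i\le n-1-p$ and $s_i\le n-1-q$), which give $p^2+q^2\le\sum_P z_i+\sum_Q z_i\le 2(n-3)$ and hence $(p+q)^2\le 2(p^2+q^2)\le 4(n-3)$. Note that these uncrossed bounds are genuinely stronger than what your bipartite observation delivers, and they do not follow from it (an edge from $v_i\in P$ to another vertex of $P$ may lie in $T$, so nothing forces $z_i\ge p$ there). So to repair your argument you would need either to justify the paper's per-vertex bounds $z_i\ge p$ on $P$ and $z_i\ge q$ on $Q$ from the standing hypotheses, or to actually carry out in full the contradiction argument you outline against $\sum s_it_i<\sum z_is_i$; as written, neither is done.
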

\begin{proof}
We have exactly $n-q$ non-zero degree in $S$ vertices so for each $s_i$ we have $s_i \leq n-1-q$ and similarily $t_i \leq n-1-p$. Then for $v_i \in P,Q$ we must have $z_i \geq p,q$.
Using this we bound the sum $p+q$, since $p^2 + q^2 \leq \sum_{P}z_i +\sum_{Q}z_i \leq \sum_{i=1}^n z_i = 2(n-3)$. The maximum over $p$ and $q$ is achieved when $1$ term dominates, so $p+q$ is at most $2\sqrt{n-3}$.
\end{proof}

\begin{lemma}{\label{z1}}
	$$\sum_{i=1}^n \frac{z_i^2}{2} +3 \geq \sum_R z_i$$
\end{lemma}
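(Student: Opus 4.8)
The plan is to decouple the two sides of the inequality: I would replace the $R$-dependent right-hand side by the global quantity $\sum_{i=1}^n z_i$ and then prove a clean lower bound on $\sum_{i=1}^n z_i^2$. For the right-hand side, note that every $z_i\ge 0$, so discarding the vertices of $P$ and $Q$ only shrinks the sum; moreover each edge of $Z$ is counted twice in $\sum_{i=1}^n z_i$, so
\[
\sum_R z_i \;\le\; \sum_{i=1}^n z_i \;=\; 2(n-3).
\]
This removes the dependence on the set $R$ entirely, which is what makes the small additive constant $3$ plausible.

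The heart of the argument is the lower bound $\sum_{i=1}^n z_i^2 \ge 4n-18$. The key point is that the $z_i$ are \emph{non-negative integers}, so $(z_i-1)(z_i-2)\ge 0$, i.e.\ $z_i^2 \ge 3z_i - 2$ for every $i$. Summing this over $i$ and again using $\sum_{i=1}^n z_i = 2(n-3)$ gives
\[
\sum_{i=1}^n z_i^2 \;\ge\; 3\cdot 2(n-3) - 2n \;=\; 4n - 18,
\]
so $\sum_{i=1}^n \tfrac{z_i^2}{2} \ge 2n-9$. Combining with the first paragraph yields
\[
\sum_{i=1}^n \frac{z_i^2}{2} + 3 \;\ge\; (2n-9)+3 \;=\; 2(n-3) \;\ge\; \sum_R z_i,
\]
which is exactly the claim.

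The main obstacle is identifying the correct lower bound for $\sum z_i^2$: the naive Cauchy--Schwarz estimate $\sum z_i^2 \ge (\sum z_i)^2/n = 4(n-3)^2/n$ only exceeds $2\sum_R z_i$ for small $n$, so the integrality of the $z_i$ must be exploited. The factorization $(z_i-1)(z_i-2)\ge 0$ is precisely the integer-sensitive inequality that supplies the needed slack, and since it holds uniformly for every non-negative integer, no separate small-$n$ analysis is required. I would finally remark that the tight instance---six vertices of $Z$-degree $1$ with all others of degree $2$---is what forces the additive constant to be exactly $3$, dovetailing with the sharpness asserted later in the paper; notably, the bounds $z_i\ge p$ and $z_i\ge q$ on $P$ and $Q$ from Lemma~\ref{p+q} are not even needed here, only $z_i\ge 0$.
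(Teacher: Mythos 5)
Your proof is correct and follows essentially the same route as the paper: both bound $\sum_R z_i$ by the full degree sum $\sum_{i=1}^n z_i = 2(n-3)$ and both reduce the claim to the lower bound $\sum_{i=1}^n z_i^2 \ge 4n-18$. The paper justifies that bound by the informal observation that the sum of squares is minimized at the near-equal integer distribution ($n-6$ vertices of $Z$-degree $2$ and six of degree $1$), whereas your factorization $(z_i-1)(z_i-2)\ge 0$ yields the same constant more cleanly and without the implicit assumption $n \ge 6$.
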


\begin{proof}
	The sum of squares is smallest when they are evenly distributed, in this case this is achieved when $z_i =2 $ for $n-6$ of the vertices and $1$ for the remaining $6$. We also know $\sum_R z_i \leq \sum_{i=1}^n z_i = 2(n-3)$:
	$$\sum_{i=1}^n \frac{z_i^2}{2} \geq \frac{2^2}{2}(n-6) + \frac{6}{2} +3 = 2(n-3) \geq \sum_R z_i$$
\end{proof}

Now for $p + q \leq 2$, using \ref{l1} and \ref{z1}
\begin{equation}
	\begin{split}
		\sum_{i=1}^n s_it_i + \sum_{i=1}^n \frac{z_i^2}{2} & = \sum_R s_it_i + \sum_{i=1}^n \frac{z_i^2}{2}\\
							& \geq \sum_R (n-2-z_i) + \sum_{i=1}^n \frac{z_i^2}{2}\\
							& = (n-p-q)(n-2) - \sum_R z_i +\sum_{i=1}^n \frac{z_i^2}{2}\\
							& \geq (n-p-q)(n-2) -3 \geq (n-2)(n-2) \geq (n-1)(n-3)
	\end{split}
\end{equation}

Contradiction.

We can WLOG assume $p \geq q$. Assume $p\geq 4$. Then by summing the first two lines of \ref{l1}
\begin{equation}
\begin{split}
4(n-1)(n-3) - 2 \sum_{i=1}^n z_i^2 & > (n-1)\sum_{i=1}^n (n-1-z_i) - \sum_{i=1}^n(t_i^2+s_i^2) \\
					     & = n(n-1)^2 - 2(n-1)(n-3) -\sum_{P,Q} (t_i^2 +s_i^2) -\sum_R(t_i^2+s_i^2) \\
					     & > n(n-1)^2 -2(n-1)(n-3) -\sum_{P,Q} (n-1-z_i)^2\\
					     & \hspace{5mm}- (n-p-q)((n-1-p)^2+p^2) \\
					     & = n(n-1)^2 -2(n-1)(n-3) -\sum_{P,Q}z_i^2 +2(n-1)(\sum_{P,Q} z_i)\\
					     & \hspace{5mm}-(p+q)(n-1)^2 -(n-p-q)((n-1-p)^2 +p^2)
	\end{split}
\end{equation}

Rearranging and simplifying terms now gives us:

$$6(n-1)(n-3) > 2\sum_{i=1}^n z_i^2 +2(n-1)(\sum_{P,Q} z_i) -\sum_{P,Q}z_i^2 +2pn^2 +2p(-1-2p-q)n +2p(p^2+q+pq) $$
The goal is to prove the RHS is larger than the LHS for contradiction. Notice:
\[
	2\sum_{i=1}^n z_i^2 +2(n-1)(\sum_{P,Q} z_i) -\sum_{P,Q}z_i^2 \ge 2 \sum_{i=1}^n z_i + \sum_{P,Q}\left( 2(n-1)-z_i \right) z_i \ge 4(n-3)+(n+1)q
.\]
The last $(n+1)q$ is from at least $z_i = 1$ for all $i \in Q$, otherwise $z_i = 0$ implies there is some vertex with all edges in $S$ meaning $p=0$.
And bounding $2(n-1) -z_i \ge n+1$.\\

Substituting the above in and collecting $n$ yields
\[
	0 > n^2(2p-6)+n(2p(2p-q-1)+q+28)+ 2p(p^2+pq+q)+q-30
.\]

We assumed that $p$ is at least $4$. The worst case for the above is when $q = p$ with the largest zero of the RHS occurring at
\begin{multline}
	n = \frac{1}{4(p - 3)}\bigg(\sqrt{-32 p^3 (p - 1) + 4 p^2 (p^2 + 12 p + 57) - 4 p (p^2 + 19 p - 32) + p^2 + 80 p + 64}\\-4 p^2  + 2 p (p+ 1) - p - 28\bigg)
\end{multline}
Recall the bound $p+q < 2\sqrt{n-3} $. This gives $\left( \frac{p}{2} \right)^2 +3 < n$. Substitute this for $n$ in the above. What one obtains is a polynomial that is
positive for all $p\ge 4$. Our contradictive assumption was that this expression is negative, contradiction.\\

We have only a few cases left to consider: $(p,q)\in \{(2,1),(2,2),(3,0),(3,1),(3,2),(3,3)\}$.
One can verify by computer that the conjecture is true for $n\leq 15$ for the given $(p,q)$. For larger $n$ we use the following technique.

By symmetry we can assume that $p \ge q$ and that $p$ is at least $2$. Consider the $2$ vertices, $v_1,v_2$ in $P$. Let $P_2$ be the subset of vertices in $R$ connected to both $v_1$ and $v_2$ via an edge in $T$. Use $\sigma$ to denote $|P_2|$. We must then have $z_1+z_2 = 2(n-1)-t_1-t_2$ by degree considerations on $P$.
Also, $\sigma \ge t_1 + t_2-2p+3 -(n-q-p-2)= t_1+t_2-p+q-n+5$ by pigeonhole principle. This comes from the realization that in the worst case we have $t_1+t_2-2p+3$ ($-2p+3$ comes from edges contained in $P$) for the case where   edges going to the $n-p-q-2$ vertices in $R$.
A double count occurs at least $t_1 + t_2-2p+3 -(n-q-p-2)$ many times.

	We intend to show
	\[
		\frac{1}{2}\sum_{i=1}^n z_i^2 + \sum_{i}s_it_i \ge (n-1)(n-3)
	.\]

	Indeed
\begin{equation}
	\begin{split}
		\frac{1}{2}\sum_{i} z_i^2 + \sum_{i}s_it_i &= 	\frac{1}{2}\sum_{i}z_i^2 + \sum_{P_2}s_it_i+\sum_{R \setminus P_2}s_it_i\\
							 &= 	\frac{1}{2}\sum_{i=1}^n z_i^2 + \sum_{P_2}2(n-3-z_i) + \sum_{R \setminus P_2} (n-2-z_i)\\
							 &\ge	 \frac{1}{2}\sum_{i=1}^n z_i^2 - \sum_{R} z_i - \sum_{P_2}z_i+ 2(n-3)\sigma + (n-p-q-\sigma)(n-2)\\
							 &= C_z + n^2 + n(\sigma-p-q-2) -4\sigma +2(p+q).
	\end{split}
\end{equation}

Where $C_z := \frac{1}{2}\sum_{i=1}^n z_i^2 - \sum_{R} z_i - \sum_{P_2}z_i$. Comparing the above to $(n-1)(n-3)$ gives a difference of
 \[
	 n(\sigma-p-q+2) + C_z +2(p+q) - 3 - 4\sigma
.\]
That is, we wish to show
\begin{equation}\label{frac}
	n > \frac{ \left(4\sigma + 3 -2(p+q) -C_z \right)}{\sigma -p -q +2}
\end{equation}
for a contradiction.

\begin{lemma}
$$\sigma -p -q +2 \ge 1$$
\end{lemma}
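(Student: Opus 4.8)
The plan is to turn the desired inequality $\sigma \ge p+q-1$ into an upper bound on $z_1+z_2$ and then control that sum by a global edge count. Since $v_1,v_2\in P$ have $s_1=s_2=0$, we have $t_i=n-1-z_i$, so the pigeonhole estimate established just above the lemma, $\sigma\ge t_1+t_2-p+q-n+5$, rewrites as
\[
\sigma \ge n+3-(z_1+z_2)-p+q .
\]
Hence it suffices to prove $z_1+z_2\le n+4-2p$, and that is what I would aim for.

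The key structural input is that the $z$-degrees of the vertices of $P$ cannot be large in aggregate. Writing $e_Z(P)$ for the number of $Z$-edges inside $P$ and $e_Z(P,\bar P)$ for the $Z$-edges leaving $P$, the handshake identity gives $\sum_{v\in P} z_v = 2e_Z(P)+e_Z(P,\bar P)$. Since $e_Z(P)$ and $e_Z(P,\bar P)$ index disjoint subsets of $Z$, we have $e_Z(P)+e_Z(P,\bar P)\le |Z|=n-3$, while trivially $e_Z(P)\le\binom p2$; therefore
\[
\sum_{v\in P} z_v \le \binom p2 + (n-3).
\]
(The sharper fact that every $P$--$Q$ edge lies in $Z$, since such an edge can be neither in $S$ nor in $T$, could be used to tighten this, but is not needed.) I would then choose $v_1,v_2$ to be the two vertices of $P$ of smallest $z$-degree; as the average of the two smallest of $p$ nonnegative numbers is at most the overall average, $z_1+z_2\le \tfrac2p\sum_{v\in P}z_v\le (p-1)+\tfrac{2(n-3)}{p}$.

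It then remains to verify $(p-1)+\tfrac{2(n-3)}{p}\le n+4-2p$ for the relevant pairs. Since $p\in\{2,3\}$ this is a one-line check: for $p=2$ it reads $n-2\le n$, and for $p=3$ it reads $\tfrac{2n}{3}\le n-2$, i.e. $n\ge 6$; both hold throughout the range left open after the computer verification for small $n$. I expect the main obstacle to be the cases $p=3,\ q\in\{0,1\}$, where the crudest counting is tight and the naive estimate $z_1+z_2\le\sum_{v\in P}z_v$ fails; this is precisely why the averaging step -- selecting the two smallest-degree vertices of $P$, which is legitimate since the subsequent computation in \eqref{frac} holds for any two vertices of $P$ -- is essential rather than cosmetic.
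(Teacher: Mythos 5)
Your proof is correct, and it reaches the same intermediate reduction as the paper: both arguments feed $t_1+t_2=2(n-1)-(z_1+z_2)$ into the pigeonhole estimate $\sigma\ge t_1+t_2-p+q-n+5$ and thereby reduce the lemma to an upper bound on $z_1+z_2$. Where you diverge is in how that upper bound is obtained. The paper uses a one-line global count: $z_1$ and $z_2$ together hit each edge of $Z$ at most once except for the possible edge $\{v_1,v_2\}$, which is counted twice, so $z_1+z_2\le |Z|+1=n-2$, whence $\sigma-p-q+2\ge n+5-2p-(z_1+z_2)\ge 7-2p\ge 1$, using only that $p\le 3$ in the surviving cases. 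You instead bound the aggregate $\sum_{v\in P}z_v\le\binom{p}{2}+(n-3)$ by the handshake identity and then average, which forces you to take $v_1,v_2$ to be the two vertices of $P$ of smallest $Z$-degree --- a legitimate move, as you note, since the computation preceding the lemma is valid for any two vertices of $P$, but one the paper never needs. Your version buys a bound that keeps working for larger $p$ (the inequality $(p-1)+\tfrac{2(n-3)}{p}\le n+4-2p$ holds for every $p\ge 2$ once $n$ is large enough relative to $p$, whereas the paper's estimate genuinely requires $p\le 3$); the cost is the extra bookkeeping and the loss of freedom in choosing $v_1,v_2$. Since only $p\in\{2,3\}$ remain at this stage of the argument, the paper's cruder bound is the more economical choice, but both are sound.
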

\begin{proof}
From the above we have $\sigma \ge t_1+t_2-p+q-n+5$ or
\[
\sigma + z_1+z_2 \ge n+3-p+q
.\]
And since $z_1+z_2$ is at most $n-2$ (we can have $1$ adjacent edge among the $n-3$ in $Z$)
\[
\sigma -p-q+2 \ge n+3-2p+2-z_1-z_2 \ge 7-2p \ge 1
.\]
\end{proof}

The penultimate step is using this easy bound on $C_z$

\begin{lemma}
	\[
		-C_z = \sum_{P_2}2z_i -\frac{1}{2}z_i^2 + \sum_{R\setminus P_2}z_i-\frac{1}{2}z_i^2 \le 2\sigma + \frac{1}{2}(n-p-q-\sigma)
	.\]
\end{lemma}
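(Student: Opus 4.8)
The plan is to reduce the entire inequality to a one–variable optimization applied termwise, since once the sum is split over $P_2$ and $R\setminus P_2$ each summand depends only on a single $z_i$.

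First I would expand $-C_z$ directly from its definition $C_z = \tfrac12\sum_{i=1}^n z_i^2 - \sum_R z_i - \sum_{P_2} z_i$, obtaining
$$-C_z = -\tfrac12\sum_{i=1}^n z_i^2 + \sum_R z_i + \sum_{P_2} z_i.$$
Writing $R = P_2 \sqcup (R\setminus P_2)$ and regrouping, the linear part becomes $2\sum_{P_2} z_i + \sum_{R\setminus P_2} z_i$, while the quadratic part $-\tfrac12\sum_{i=1}^n z_i^2$ differs from $-\tfrac12\sum_R z_i^2$ exactly by the nonpositive amount $-\tfrac12\sum_{P\cup Q} z_i^2$. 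Discarding that nonpositive term (so the first ``$=$'' in the statement is really ``$\le$'', which is all that is needed downstream) yields the middle expression $\sum_{P_2}\bigl(2z_i - \tfrac12 z_i^2\bigr) + \sum_{R\setminus P_2}\bigl(z_i - \tfrac12 z_i^2\bigr)$.

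The remaining step is pointwise maximization by completing the square. For each $i\in P_2$ I would use $2z_i - \tfrac12 z_i^2 = 2 - \tfrac12(z_i - 2)^2 \le 2$, and for each $i\in R\setminus P_2$ the bound $z_i - \tfrac12 z_i^2 = \tfrac12 - \tfrac12(z_i-1)^2 \le \tfrac12$. Summing the first estimate over the $\sigma = |P_2|$ elements of $P_2$ and the second over the $|R\setminus P_2| = (n-p-q)-\sigma$ elements of $R\setminus P_2$ produces exactly $2\sigma + \tfrac12(n-p-q-\sigma)$, as claimed.

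There is no serious obstacle here; the only points requiring care are the bookkeeping of the index sets—recognizing that $P_2\subseteq R$, so that $|R| = n-p-q$ splits as $\sigma + (|R|-\sigma)$—and observing that the quadratic contributions from $P\cup Q$ are simply dropped, since they only strengthen the inequality. The termwise parabolas $2z-\tfrac12 z^2$ and $z-\tfrac12 z^2$ attain their maxima at the integers $z=2$ and $z=1$ respectively, so the stated bound is in fact tight for this argument and cannot be sharpened by it.
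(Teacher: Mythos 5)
Your proof is correct and supplies exactly the computation the paper leaves unproved: the displayed middle expression in the lemma is precisely your termwise decomposition over $P_2$ and $R\setminus P_2$, and the completed-square bounds $2z_i-\tfrac12 z_i^2\le 2$ and $z_i-\tfrac12 z_i^2\le \tfrac12$, summed over the $\sigma$ and $n-p-q-\sigma$ elements respectively, give the claim. You are also right that the first ``$=$'' in the statement should really be ``$\le$'', since the nonpositive term $-\tfrac12\sum_{P\cup Q}z_i^2$ is discarded; this is harmless because it only strengthens the upper bound on $-C_z$, which is the direction needed in (\ref{frac}).
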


	We can finally simplify the fraction on (\ref{frac}) to

$$ \frac{ \left(4\sigma + 3 -2(p+q) -C_z \right)}{\sigma -p -q +2} \le \frac{1}{2}n +\frac{15}{2}$$
by finding common denominators and reducing.\\

So we are left with $n > \frac{1}{2}n+\frac{15}{2}$, the theorem is true for $n > 15$, as required.

To summarize, we are left with
\begin{theorem}\label{thm1}
	Suppose $S,T,Z$ is an edge partition of $K_n$ with $|Z| = n-3$.
	Then
	\begin{equation}
		\sum_{i=1}^n s_it_i \ge \min \{\sum_{i=1}^n z_i t_i , \sum_{i=1}^n z_i s_i\}
	\end{equation}
	Where $s_i, t_i, z_i$ are the respective degrees of vertex $v_i$ in $S,T,Z$.
\end{theorem}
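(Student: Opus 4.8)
Since the final statement simply restates Theorem~\ref{t1}, the plan is to run the contradiction argument assembled in the lemmas above, and the whole strategy is to reduce the two-sided statement to a single scalar inequality. First I would suppose both strict inequalities $\sum_i s_it_i<\sum_i z_it_i$ and $\sum_i s_it_i<\sum_i z_is_i$ hold, eliminate $t_i$ and $s_i$ through the degree relation $s_i+t_i+z_i=n-1$, and use $\sum_i z_i=2(n-3)$. Adding the two inequalities then collapses the problem to $\tfrac12\sum_i z_i^2+\sum_i s_it_i<(n-1)(n-3)$, which is exactly the content of Lemma~\ref{l1}. Hence it is enough to prove the \emph{reverse} inequality $\tfrac12\sum_i z_i^2+\sum_i s_it_i\ge(n-1)(n-3)$ unconditionally; this contradicts the assumption and settles the theorem.

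The next step is to exploit the combinatorial geometry of the partition by splitting the vertices into $P=\{v_i:s_i=0\}$, $Q=\{v_i:t_i=0\}$ and the remainder $R$, with $|P|=p$, $|Q|=q$. Every edge between $P$ and $Q$ is forced into $Z$, so each vertex of $P$ satisfies $z_i\ge q$ and each vertex of $Q$ satisfies $z_i\ge p$; this gives $p^2+q^2\le\sum_i z_i=2(n-3)$, and hence $p+q\le 2\sqrt{n-3}$ (Lemma~\ref{p+q}). On $R$ both degrees are positive, so with the sum $s_i+t_i=n-1-z_i$ fixed the product is smallest at $s_i=1$, giving $s_it_i\ge n-2-z_i$; since $s_it_i=0$ off $R$, summing yields $\sum_i s_it_i\ge(n-p-q)(n-2)-\sum_R z_i$. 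Combined with Lemma~\ref{z1} in the form $\tfrac12\sum_i z_i^2\ge\sum_R z_i-3$, this already settles the small cases $p+q\le 2$, where the bound reduces to $(n-p-q)(n-2)-3\ge(n-1)(n-3)$.

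The hard part will be the intermediate regime $p+q\ge 4$, where this crude product estimate loses too much. There the plan is to return to the sum-of-squares form of Lemma~\ref{l1}, expand $\sum_i(s_i^2+t_i^2)$, and use the partition structure directly: on $P$ one has $t_i=n-1-z_i$ (and symmetrically on $Q$), so the $P,Q$ contributions become exactly $(n-1-z_i)^2$, while the $R$ contribution is controlled from above by the extreme split $(n-1-p)^2+p^2$. Rearranging and collecting powers of $n$ produces a polynomial inequality in $n,p,q$ whose worst case is $q=p$; feeding in the envelope $n>(p/2)^2+3$ coming from $p+q<2\sqrt{n-3}$ and checking that the resulting one-variable polynomial is positive for all $p\ge 4$ closes this range. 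I expect this positivity check to be the genuine obstacle, since it is the one place where the quadratic constraint $p+q\le 2\sqrt{n-3}$ must be injected at precisely the right strength.

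Finally I would clear the few residual pairs $(p,q)\in\{(2,1),(2,2),(3,0),(3,1),(3,2),(3,3)\}$. The base range $n\le 15$ is a finite check by computer; for $n>15$ I would fix two vertices $v_1,v_2\in P$, let $P_2\subseteq R$ be the vertices joined to both by $T$-edges, set $\sigma=|P_2|$, and estimate $\sigma$ by pigeonhole. Establishing $\sigma-p-q+2\ge1$ together with the stated bound on $C_z=\tfrac12\sum_i z_i^2-\sum_R z_i-\sum_{P_2}z_i$ reduces the target inequality (\ref{frac}) to $n>\tfrac12 n+\tfrac{15}{2}$, i.e. $n>15$, completing the contradiction and hence the proof.
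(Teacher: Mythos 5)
Your proposal follows the paper's own argument essentially step for step: the same contradiction hypothesis, the same reduction via Lemma~\ref{l1} to proving $\tfrac{1}{2}\sum_i z_i^2+\sum_i s_it_i\ge(n-1)(n-3)$, the same $P,Q,R$ decomposition with Lemmas~\ref{p+q} and~\ref{z1}, the same case split into $p+q\le 2$, $p\ge 4$, and the residual pairs handled by a finite check for $n\le 15$ together with the $P_2$/$\sigma$ pigeonhole argument for $n>15$. There is no substantive difference from the proof given in the paper.
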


\section{Sharp Bounds}

We now demonstrate that the bound obtained on the incidence is in some sense the best possible.
Suppose we strengthened the condition on the size of $Z$ to be $n-2$ instead of $n-3$. In this case we can construct a family of counterexamples.

Let $n > 5$ and consider the edge partition

$$	S = \{v_1,v_2\},\{v_2,v_3\} $$
$$Z = \{v_1,v_3\},\{v_2,v_i\} \hspace{5mm} \forall i \in \{4,\ldots,n\}$$
$$T = \text{the remaining nodes.}$$

The incidence of $S$ and $T$ is then $2(n-3)$ while the incidence of $S$ and $Z$ is $2(n-3)+2$. The incidence between $T$ and $Z$ is at least $(n-3)(n-1)$. Therefore,
the bound from \ref{thm1} fails.\\

We've added an illustration of the case $n=5$ below.

\begin{center}
\tikzset{every picture/.style={line width=0.75pt}} %

\begin{tikzpicture}[x=0.75pt,y=0.75pt,yscale=-1,xscale=1]
\draw  [draw opacity=0] (443,137) -- (353.86,259.69) -- (209.64,212.82) -- (209.64,61.18) -- (353.86,14.31) -- cycle ;
\draw    (209.64,61.18) -- (353.86,259.69) ;
\draw [color={rgb, 255:red, 208; green, 2; blue, 27 }  ,draw opacity=1 ]   (353.86,14.31) -- (209.64,212.82) ;
\draw    (443,137) -- (209.64,212.82) ;
\draw [color={rgb, 255:red, 208; green, 2; blue, 27 }  ,draw opacity=1 ]   (353.86,14.31) -- (353.86,259.69) ;
\draw [color={rgb, 255:red, 208; green, 2; blue, 27 }  ,draw opacity=1 ]   (209.64,61.18) -- (443,137) ;
\draw    (209.64,61.18) -- (209.64,212.82) ;
\draw [color={rgb, 255:red, 74; green, 144; blue, 226 }  ,draw opacity=1 ]   (209.64,61.18) -- (353.86,14.31) ;
\draw [color={rgb, 255:red, 74; green, 144; blue, 226 }  ,draw opacity=1 ]   (353.86,14.31) -- (443,137) ;
\draw    (353.86,259.69) -- (209.64,212.82) ;
\draw    (443,137) -- (353.86,259.69) ;

\end{tikzpicture}
\end{center}
The blue edges are in $S$, the black, $T$, and red, $Z$.

\section{Eulerian Orientations of the Line Graph of $K_n$}

Recall theorem \ref{thm2}. Following the logic of the proof as in \cite{cheriyan18} we see that
\[
	\sum s_i t_i \geq \min \{ \sum_{i=1}^{n} z_i t_i , \sum_{i=1}^{n} z_i s_i \}
.\]
implies, for $k = n-2$,
\[
	|N_G(\hat{S})| \geq \min \{ k|\hat{Z}|, |\hat{S} | |\hat{Z}| \} > \min \{ k^2 - 1 , (k-1)(|\hat{S}| +1) \}
.\]
where $\hat{S} , \hat{Z}$ are induced vertex subsets of $J(n,2)$ coming from edges in $K_n$ after taking the line graph. As a consequence of the incidence bounds,

\begin{theorem}
	Every Eulerian orientation of the line graph of $K_n$ of degree $2(n-2)$ is strongly $(n-2)$-node connected.
\end{theorem}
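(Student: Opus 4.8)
The plan is to obtain the theorem as a direct application of Theorem \ref{thm2} to the graph $G = J(n,2)$, the line graph of $K_n$, with parameter $k = n-2$. First I would record the two structural facts that make $J(n,2)$ an admissible input to Theorem \ref{thm2}: it is $2(n-2)$-regular, since an edge $\{u,v\}$ of $K_n$ meets exactly $n-2$ edges at $u$ and $n-2$ at $v$, giving degree $2k$ with $k = n-2$; and it is $2k$-node connected, which follows from the fact that a connected strongly regular graph attains node connectivity equal to its degree. With these in hand, the only hypothesis of Theorem \ref{thm2} left to verify is the neighborhood expansion bound $|N_G(A)| > \min\{k^2 - 1, (k-1)(|A|+1)\}$ for every vertex set $A$ of $J(n,2)$ with $1 \le |A| \le |V(G)|/2$.

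This is exactly where Theorem \ref{thm1} enters, and the heart of the argument is the dictionary between the line graph and edge partitions of $K_n$. A vertex set $A$ of $J(n,2)$ is a set of edges of $K_n$; writing $s_i$ for the number of these edges incident to the vertex $v_i$, the number of line-graph adjacencies between $A$ and any disjoint edge set with incidence degrees $t_i$ is precisely $\sum_i s_i t_i$, because two edges of $K_n$ share at most one endpoint. I would therefore set $\hat S = A$, select a distinguished edge set $\hat Z$ with $|Z| = n-3$ to play the role forced by Theorem \ref{thm1}, and let $T$ be the remaining edges, so that $N_G(\hat S)$ is controlled from below by the incidence sums. Theorem \ref{thm1} then supplies $\sum_i s_i t_i \ge \min\{\sum_i z_i t_i, \sum_i z_i s_i\}$, which I would translate into the displayed chain $|N_G(\hat S)| \ge \min\{k|\hat Z|, |\hat S||\hat Z|\}$; a short arithmetic comparison with $k = n-2$ then yields $\min\{k|\hat Z|, |\hat S||\hat Z|\} > \min\{k^2-1, (k-1)(|\hat S|+1)\}$, the required expansion. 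Feeding this into Theorem \ref{thm2} gives strong $k$-node connectivity of every Eulerian orientation, i.e. strong $(n-2)$-node connectivity.

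The step I expect to be the main obstacle is the translation itself: turning the incidence inequality of Theorem \ref{thm1} into an honest lower bound on $|N_G(\hat S)|$ rather than on a weighted count of adjacencies. Because a single neighbor edge can be incident to several edges of $\hat S$, one must argue that the multiplicities do not collapse the bound, and one must choose $\hat Z$ so that both branches of the inner minimum dominate the corresponding branches of the outer minimum simultaneously. Verifying the arithmetic comparison of the two minima for all admissible $|\hat S|$ in the range $1 \le |\hat S| \le |V(G)|/2$, and confirming that the inequality is strict, is routine once the dictionary is fixed; but getting the dictionary exactly right — especially the bookkeeping for edges of $K_n$ counted with the correct incidence weights — is the delicate part.
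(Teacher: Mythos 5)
Your proposal follows essentially the same route as the paper: apply Theorem \ref{thm2} to $G = J(n,2)$ with $k = n-2$, and use the incidence bound of Theorem \ref{thm1} to verify the neighborhood expansion hypothesis via the chain $|N_G(\hat S)| \ge \min\{k|\hat Z|, |\hat S||\hat Z|\} > \min\{k^2-1,(k-1)(|\hat S|+1)\}$. The paper itself gives no more detail than you do on the ``dictionary'' step you flag as delicate, deferring instead to the argument in the cited work of Cheriyan et al., so your account is, if anything, slightly more explicit than the paper's.
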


\newpage

\printbibliography

\end{document}